\documentclass[12pt]{amsart}

\usepackage{amsmath,amssymb,amsthm,enumerate}
\usepackage{ytableau}
\usepackage[margin=1in]{geometry}

\synctex=1

\numberwithin{equation}{section}

\DeclareMathOperator{\RE}{Re}
\DeclareMathOperator{\IM}{Im}

\newcommand{\sm}{\setminus}

\newcommand{\N}{\mathbb{N}}
\newcommand{\Z}{\mathbb{Z}}

\newcommand{\R}{\mathbb{R}}

\newcommand{\E}{\mathbb{E}}

\newcommand{\ve}{\varepsilon}

\newcommand{\1}{^{-1}}

\newcommand{\f}[2]{\frac{#1}{#2}}

\newcommand{\Mod}[1]{\ (\mathrm{mod}\ #1)}

\newtheorem{theorem}{Theorem}[section]

\newtheorem{lemma}[theorem]{Lemma}

\numberwithin{theorem}{section}

\title{On even entries in the character table of the symmetric group}
\author{Sarah Peluse}
\address{Mathematical Institute, University of Oxford, Radcliffe Observatory Quarter, Woodstock Road, Oxford OX2 6GG, United Kingdom}
\email{sarah.peluse@maths.ox.ac.uk}

\begin{document}
\begin{abstract}
We show that almost every entry in the character table of $S_n$ is even as $n\to\infty$. This resolves a conjecture of Miller. We similarly prove that almost every entry in the character table of $S_n$ is zero modulo $3,5,7,11,$ and $13$ as $n\to\infty$, partially addressing another conjecture of Miller.
\end{abstract}

\maketitle

\section{Introduction}\label{sec1}

In~\cite{Miller2019}, Miller conjectured that the proportion of odd entries in the character table of $S_n$ goes to zero as $n$ goes to infinity, based on computational data for $n$ up to $76$. It has been known for a long time, due to work of McKay~\cite{McKay1972}, that only a vanishingly small proportion of characters of the symmetric group have odd degree. Recently, Gluck~\cite{Gluck2019} showed that, more generally, the density of odd entries tends to zero in columns of the character table corresponding to cycle types whose odd factor has few distinct cycle lengths. This is still a very sparse set of columns, however. Morotti~\cite{Morotti2020} has also shown that, for any fixed prime $p$, a different sparse set of columns consists almost completely of multiples of $p$.

In this paper, we prove Miller's conjecture.
\begin{theorem}\label{thm1.1}
The density of odd entries in the character table of $S_n$ goes to $0$ as $n\to\infty$.
\end{theorem}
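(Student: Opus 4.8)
The entries of the table are the integers $\chi^\lambda(\mu)$ for $\lambda,\mu\vdash n$, so there are $p(n)^2$ of them and the claim is that $\#\{(\lambda,\mu):\chi^\lambda(\mu)\text{ odd}\}=o(p(n)^2)$. Since the trivial and sign characters take the value $\pm1$ on every class, one cannot make an entire column even; the plan is instead to exhibit a set $\mathcal B_n$ of ``bad'' cycle types with $|\mathcal B_n|=o(p(n))$ such that every column $\mu\notin\mathcal B_n$ contains only $o(p(n))$ odd entries. Then the bad columns contribute at most $|\mathcal B_n|\cdot p(n)=o(p(n)^2)$ odd entries and the good columns at most $p(n)\cdot o(p(n))=o(p(n)^2)$, which proves the theorem. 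So it suffices to show: for all but $o(p(n))$ cycle types $\mu\vdash n$, one has $\chi^\lambda(\mu)\equiv 0\pmod2$ for all but $o(p(n))$ partitions $\lambda\vdash n$.

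\textbf{Genericity of $\mu$.} I would first isolate a large ``$2$-structured'' sub-multiset of the parts of a typical $\mu$. For a uniformly random $\mu\vdash n$ the multiplicity of the part $j$ concentrates near $\tfrac{\sqrt{6n}}{\pi}\cdot\bigl(e^{\pi j/\sqrt{6n}}-1\bigr)^{-1}\sim\tfrac{\sqrt{6n}}{\pi j}$, so if $T=T(n)\to\infty$ sufficiently slowly (say $T=\lfloor\log\log n\rfloor$) then, with probability $1-o(1)$, every part $j\le T$ occurs at least $K=K(n)\to\infty$ times while all parts $\le T$ together account for at most $o(n)$ of the total. Hence for all but $o(p(n))$ cycle types we may write $\mu=\mu'\sqcup\sigma$ with $\sigma=(1^{a_1}2^{a_2}\cdots T^{a_T})$, all $a_j\ge K$, and $|\sigma|=o(n)$; in particular $\sigma$ contains each of $2,4,8,\dots,2^{\lfloor\log_2 T\rfloor}$ with large multiplicity. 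Declaring all other $\mu$ to be bad, these now lie in $\mathcal B_n$.

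\textbf{Reduction via Murnaghan--Nakayama and the $2$-core tower.} Computing $\chi^\lambda(\mu)$ by the Murnaghan--Nakayama rule and removing the rim hooks for the parts of $\mu'$ first, one gets $\chi^\lambda(\mu)=\sum_\xi c_\xi\,\chi^\xi(\sigma)$, where $\xi$ runs over the partitions of $|\sigma|$ obtained from $\lambda$ by successively stripping rim hooks of the sizes in $\mu'$ and $c_\xi\in\Z$. Thus it is enough to control $\chi^\xi(\sigma)\bmod2$, and here the large even multiplicities of $\sigma$ are decisive: pulling the even parts out of $\sigma$, passing to the $2$-quotient $(\xi^{(0)},\xi^{(1)})$ of $\xi$, and iterating down the $2$-core tower (the same machinery that underlies McKay's theorem on odd degrees and Macdonald's $2$-adic valuation formula) should express $\chi^\xi(\sigma)$ modulo $2$ as a sign times a product of character values of much smaller symmetric groups, in which the huge multiplicities coming from $K\to\infty$ force one factor to be a symmetrized sum that vanishes mod $2$ unless the $2$-core tower of $\xi$ is essentially maximal --- a McKay-type constraint met by only $o(p(|\sigma|))$ partitions $\xi$. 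The main obstacle I anticipate is turning this into a genuine $o(p(n))$ bound \emph{on the original group}: a priori many $\lambda\vdash n$ can reach the same exceptional $\xi$, so one must either run the $2$-core-tower reduction on $\lambda$ and $\mu$ \emph{simultaneously}, tracking how the tower of $\lambda$ is pinned down by $\mu$, or set up an induction on $n$ with a suitably strengthened (weighted) hypothesis; extracting uniformity here is the crux, and the same scheme for an odd prime $p$ should break down precisely when $p$ grows past the point where the small ``$p$-quotient base cases'' still have a density-zero set of entries divisible by $p$, which is presumably the source of the restriction to $p\le13$.
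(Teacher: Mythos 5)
Your high-level strategy --- prove that for all but $o(p(n))$ cycle types $\mu$, the value $\chi_\mu^\lambda$ is even for all but $o(p(n))$ partitions $\lambda$ --- matches the paper's, and your observation that a typical $\mu$ has each small part with high multiplicity is in the right spirit. But the engine you propose for handling a single column has a genuine gap, which you flag yourself as ``the crux'': after stripping the parts of $\mu'$ by Murnaghan--Nakayama and iterating down the $2$-core tower of the resulting $\xi$, you have no control on how many $\lambda\vdash n$ flow to an exceptional $\xi$, and without that the argument does not close. The two remedies you sketch (running the tower reduction on $\lambda$ and $\mu$ simultaneously, or a weighted induction on $n$) are left unexecuted, and neither is what the paper does.

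The paper sidesteps the uniformity problem by a reduction on the $\mu$-side that you are circling but never state. By a simple Murnaghan--Nakayama congruence (Lemma~\ref{lem2.1}), replacing two equal parts $m,m$ of $\mu$ by a single part $2m$ leaves $\chi_\mu^\lambda$ unchanged mod $2$; iterating gives a partition $\tilde\mu$ with distinct parts and $\chi_\mu^\lambda\equiv\chi_{\tilde\mu}^\lambda\pmod 2$. The high multiplicities you identify in a typical $\mu$ then merge into a single very large part of $\tilde\mu$. With $M_\mu^{(k)}$ the total mass of parts of $\mu$ of the form $k\cdot 2^j$ (for $k$ odd), one has $M_\mu^{(k)}=M_{\tilde\mu}^{(k)}$ and the parts of $\tilde\mu$ of that form are read off the binary expansion of $M_\mu^{(k)}/k$; a saddle-point computation (Lemma~\ref{lem3.1}) shows $M_\mu^{(k)}$ concentrates near $\frac{\sqrt{6}}{2\pi\log 2}\sqrt{n}\log n$, and since $1/\log 2>1$, using $k\in\{1,3,5\}$ to defeat the floor in the exponent shows that $\tilde\mu$ typically has a part exceeding $(1+\frac{1}{100})\frac{\sqrt{6}}{2\pi}\sqrt{n}\log n$. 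At that point nothing on the $\lambda$-side needs tracking: by Lemma~\ref{lem2.3} almost every $\lambda$ is a $t$-core for $t$ that large, so $\chi_{\tilde\mu}^\lambda=0$ by Lemma~\ref{lem2.2}. Being a $t$-core is far cruder than any statement about the full $2$-core tower of $\lambda$, which is exactly why the count is tractable. Your guess about the origin of the $p\leq 13$ restriction is also off: in the paper it comes from a numerical inequality in a saddle-point estimate governing how many residue classes $k$ can be exploited simultaneously, not from base cases of a tower induction.
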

We will present two proofs of this result. The first is shorter and specific to the prime $2$. The second is longer, but generalizes to some other primes, and is quantitatively stronger. While the conjecture resolved by Theorem~\ref{thm1.1} is the main focus of~\cite{Miller2019}, Miller also presents data for the number of entries in the character table of $S_n$ (for $n$ up to $38$) that are divisible by $3$ and $5$. Based on this, he conjectured that, for any fixed prime $p$, the proportion of the character table of $S_n$ that is not divisible by $p$ tends to zero as $n$ goes to infinity. We also verify this conjecture for $p\leq 13$.
\begin{theorem}\label{thm1.2}
Let $p=2,3,5,7,11,$ or $13$. There exists $\delta_p>0$ such that the density of entries of the character table of $S_n$ that are not divisible by $p$ is at most $O_p(n^{-\delta_p})$.
\end{theorem}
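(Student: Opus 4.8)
The plan is to reduce the statement to one about individual columns of the character table and then to analyze a generic column. Writing $P(n)$ for the number of partitions of $n$, the table has $P(n)^2$ entries, and for $\mu\vdash n$ put $D_p(\mu)=\{\lambda\vdash n:p\nmid\chi^\lambda(\mu)\}$, so that the number of entries not divisible by $p$ is $\sum_{\mu\vdash n}|D_p(\mu)|$. It therefore suffices to produce a set $\mathcal{G}$ of ``generic'' cycle types such that the number of $\mu\vdash n$ outside $\mathcal{G}$ is $O_p(n^{-\delta_p}P(n))$ and $|D_p(\mu)|=O_p(n^{-\delta_p}P(n))$ for every $\mu\in\mathcal{G}$: then $\sum_\mu|D_p(\mu)|\le O_p(n^{-\delta_p}P(n))\cdot P(n)+P(n)\cdot O_p(n^{-\delta_p}P(n))=O_p(n^{-\delta_p}P(n)^2)$, which is the theorem. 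The membership condition defining $\mathcal{G}$ will be a condition on the multiset of small parts of $\mu$, engineered precisely so that the ribbon‑removal argument below yields genuine divisibility by $p$; in the cleanest cases it asks that some small part size $q$ (for instance $q=p$) occur in $\mu$ with a multiplicity $m_q(\mu)$ that is large and whose base‑$p$ digits are spread out. That all but $O_p(n^{-\delta_p}P(n))$ partitions $\mu$ satisfy such a condition is a fact about the generating function $\prod_{q\ge1}(1-x^q)^{-1}$, provable by the circle method or by the standard saddle‑point analysis of the joint law of the small‑part multiplicities of a uniformly random partition.

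To bound $|D_p(\mu)|$ for $\mu\in\mathcal{G}$, I would combine the Murnaghan--Nakayama rule with the $q$‑runner abacus, where $q$ is the good part size supplied by genericity. Removing all the $q$‑ribbons coming from the $m_q(\mu)$ copies of $q$ before the other parts of $\mu$, each such removal is an elementary up‑move on a single runner of the $q$‑abacus; hence the signed number of ways to perform all of them, carrying $\lambda$ to some $\lambda'$ with the same $q$‑core, equals, up to a sign, a multinomial coefficient $\binom{m_q(\mu)}{b_0,\dots,b_{q-1}}$ times a product over the runners of skew standard tableau counts attached to the components $\lambda^{(0)},\dots,\lambda^{(q-1)}$ of the $q$‑quotient of $\lambda$, with the remaining parts of $\mu$ acting only through $\chi^{\lambda'}$ on the smaller cycle type $\mu\setminus(q^{m_q(\mu)})$. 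Reducing modulo $p$, Kummer's theorem annihilates every multinomial coefficient except those for which $b_0+\cdots+b_{q-1}=m_q(\mu)$ with no carries in base $p$; the genericity of $m_q(\mu)$ makes those decompositions very restrictive, so that $\chi^\lambda(\mu)\bmod p$ is now controlled by a short sum whose non‑vanishing forces the $q$‑core of $\lambda$, a character value of that core, and --- recursively, since each surviving term is again a character value of a smaller symmetric group --- the mod‑$p$ behaviour of the component degrees $f^{\lambda^{(j)}}$ to be non‑trivial. Iterating this reduction, using genericity to supply a good part size at each stage, the surviving $\lambda$ get pinned down by a $p$‑core‑tower condition of precisely the kind underlying McKay's theorem (and its extension to characters of $p'$‑degree); the number of $\lambda\vdash n$ meeting it is $P(n)^{o(1)}$, hence a fortiori $O_p(n^{-\delta_p}P(n))$, and bookkeeping of the exponents through the recursion gives the claimed bound.

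The hard part is to make this reduction uniform over all $\mu\in\mathcal{G}$ and all rows $\lambda$, rather than only in the clean situation where the number of $q$‑ribbons removed matches the $q$‑weight of $\lambda$. In general one must show that $\chi^\lambda(\mu)$ vanishes outright unless the $q$‑weight and the finer $q^k$‑weights of $\lambda$ lie in a narrow range, must control the interleaving of the $q$‑ribbon removals with the removals from the other parts of $\mu$, and must keep every error term generated along the recursion summable with room to spare. The accumulation of these errors --- which is governed by the number of partitions with prescribed small‑part multiplicities, by the number of $q$‑cores of a given size, and by the depth of the $p$‑core tower --- is what forces $p$ to be small: the savings coming from the carry condition and from the McKay‑type count exceed these losses only when $p\le13$, and this is the origin of that restriction. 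Granting the two inputs above, the counting in the first paragraph gives Theorem~\ref{thm1.2}, and the case $p=2$ recovers Theorem~\ref{thm1.1}.
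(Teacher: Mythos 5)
Your plan is a genuinely different route from the paper, but it is a considerably harder one and it leaves the main difficulty unaddressed. The paper does not go through the Murnaghan--Nakayama rule, the abacus, Kummer's theorem, or any core‑tower/McKay‑type counting of rows. Instead, it exploits a single elementary congruence (Lemma~\ref{lem2.1}): if $\nu$ is obtained from $\mu$ by replacing $p$ parts of equal size $m$ by one part of size $pm$, then $\chi^\lambda_\mu\equiv\chi^\lambda_\nu\pmod p$. Iterating produces a ``reduced'' cycle type $\tilde\mu$ with $\chi^\lambda_\mu\equiv\chi^\lambda_{\tilde\mu}\pmod p$, and the whole problem is converted from a divisibility question into a \emph{vanishing} question: it suffices to show $\chi^\lambda_{\tilde\mu}=0$ for almost all pairs $(\lambda,\mu)$. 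That vanishing is then obtained from the crude fact that $\chi^\lambda_\nu=0$ whenever $\nu$ has a part of size $t$ and $\lambda$ is a $t$-core (Lemma~\ref{lem2.2}), together with the observation that almost every $\lambda\vdash n$ is a $t$-core once $t$ exceeds $\tfrac{\sqrt 6}{2\pi}\sqrt n\log n$ by a constant factor (Lemma~\ref{lem2.3}). So the heart of the paper's argument is purely about the distribution of cycle types: one studies the partition statistics $M_\mu^{(k)}=\sum_{\mu_i=kp^j}\mu_i$, whose base-$p$ expansion directly encodes the parts of $\tilde\mu$ of the form $kp^j$, and shows via saddle-point/Mellin analysis that for some admissible $k$ this is large enough to force a large part of $\tilde\mu$.

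What your proposal buys, if it could be executed, is an avoidance of Lemma~\ref{lem2.1} and an analysis localized to a single column. But the costs are severe. First, your genericity condition (``$m_q(\mu)$ large with spread-out base-$p$ digits'') is a hypothesis on a fixed small part size $q$, and it is not at all clear that a positive density of columns have such a $q$ with the quantitative strength you need; the paper instead uses the statistics $M_\mu^{(k)}$ precisely because merging over powers of $p$ is what makes a large ``effective'' part appear in $\tilde\mu$, and you never invoke that mechanism. Second, the multinomial/Kummer step is delivered only in outline: the Murnaghan--Nakayama expansion over removals of $m_q(\mu)$ ribbons of size $q$ does not decouple into a single clean multinomial coefficient times a character of a smaller group unless you first restrict to removals that stay on the $q$-quotient structure and account for the interleaving with the other parts of $\mu$ --- exactly the point you flag as ``the hard part'' and then do not resolve. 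Third, and most tellingly, your explanation for the restriction to $p\le 13$ (a balance between carry-condition savings and core-tower losses) has no computation behind it and does not match the actual source of the constraint in the paper, which is a sharp threshold $\varepsilon_1(p)>1/4$ in the range of $k$ for which the approximate independence of the $M_\mu^{(k)}$ can be proved (Lemma~\ref{lem4.1}); this inequality fails exactly when $p>13$. As written, your proposal replaces the one short congruence that makes the paper's proof tractable with a much heavier combinatorial machine, and the two inputs you ask the reader to ``grant'' --- the uniform Kummer/core-tower reduction and the density of the generic set --- are precisely where a proof would have to do real work. So this is a different approach, but it contains genuine gaps and should not be regarded as a proof.
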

In contrast to the situation modulo primes, it appears from the data in~\cite{Miller2019} that the proportion of zeros in the character table of $S_n$ may tend to a positive constant. Our methods do not say anything on this matter, though very recently Larsen and Miller~\cite{LarsenMiller2020} have shown that the proportion of nonzero entries of character tables of groups of Lie type goes to zero as the rank goes to infinity.

The remainder of this paper is organized as follows. In Section~\ref{sec2}, we recall some useful facts about characters of the symmetric group and set up the proofs of Theorems~\ref{thm1.1} and~\ref{thm1.2}. We prove Theorem~\ref{thm1.1} in Section~\ref{sec3} using a short argument specific to the prime $2$, and prove the more general Theorem~\ref{thm1.2} in Section~\ref{sec4}.

\section*{Acknowledgments}
The author thanks Persi Diaconis for bringing this problem to her attention and David Gluck and Alex Miller for helpful comments on earlier drafts of this paper.

The author is supported by the NSF Mathematical Sciences Postdoctoral Research Fellowship Program under Grant No. DMS-1903038

\section{Preliminaries and setting up the arguments}\label{sec2}

For any two partitions $\lambda$ and $\mu$ of $n$, let $\chi_{\mu}^{\lambda}$ denote the value of the character of $S_n$ corresponding to the partition $\lambda$ on the congruence class of permutations with cycle type $\mu$. The proofs of Theorems~\ref{thm1.1} and~\ref{thm1.2} will require a couple of basic results about characters of the symmetric group. The first gives us a useful condition for determining character values modulo primes (see, for example, Section~3 of~\cite{OdlyzkoRains2000} or Proposition~1 of~\cite{Miller2019}).
\begin{lemma}\label{lem2.1}
  Let $\lambda$ and $\mu$ be partitions of $n$. If $\nu$ is another partition of $n$ that can be obtained from $\mu$ by replacing $p$ parts of the same size $m$ by one part of size $pm$, then we have
  \[
\chi_{\mu}^{\lambda}\equiv\chi_{\nu}^\lambda\Mod{p}.
  \]
\end{lemma}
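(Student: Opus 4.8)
The natural approach is via the Murnaghan–Nakayama rule, which expresses $\chi_\mu^\lambda$ recursively by removing rim hooks (border strips). The plan is to induct on the number of parts of $\mu$ (equivalently, on $n$ with the "extra" structure), comparing the Murnaghan–Nakayama expansion of $\chi_\mu^\lambda$ with that of $\chi_\nu^\lambda$. Concretely, write $\mu = (m^p, \rho)$ where $\rho$ collects the remaining parts, so that $\nu = (pm, \rho)$ after the replacement. I would single out the part $pm$ of $\nu$ and peel it off first via Murnaghan–Nakayama, and peel off the $p$ copies of $m$ in $\mu$ first (one after another). This reduces the claim to a statement comparing, for each Young diagram $\lambda$, the sum of signs over ways to remove a single rim hook of size $pm$ with the ($p$-fold) sum over ways to successively remove $p$ rim hooks each of size $m$ — and then appealing to the inductive hypothesis on the common partition $\rho$.

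The combinatorial heart of the matter is therefore the following mod-$p$ identity about rim hook removal, which is essentially the statement that the $p$-core and $p$-quotient machinery is compatible with Murnaghan–Nakayama. For a fixed $\lambda$, let $A$ be the signed count (sign = $(-1)^{\text{height}}$) of rim hooks of size $pm$ that can be removed from $\lambda$, each weighted by $\chi^{\lambda'}_\rho$ where $\lambda'$ is the resulting diagram; let $B$ be the analogous signed count where we instead remove $p$ rim hooks of size $m$ in succession (summing over all orders and all intermediate shapes), again weighting the final diagram by $\chi^{\cdot}_\rho$. The claim is $A \equiv B \Mod p$. The cleanest way to see this: group the $p!$-many ordered removal sequences in $B$ according to the final shape $\lambda'$; for a fixed reachable $\lambda'$, the set of size-$m$ rim hooks removed (as a set of cells) is a union of "chains" in the abacus/bead model, and removing a single $pm$-rim hook corresponds to sliding one bead down $p$ positions while removing $p$ size-$m$ hooks corresponds to sliding a bead down one position $p$ times. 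When these land on the same final configuration, the total sign matches up to a factor, and the orderings that do \emph{not} correspond to a single bead sliding down come in groups whose size is divisible by $p$ (one can permute which of $p$ indistinguishable unit-slides happens when), so they vanish mod $p$. This is exactly the kind of orbit-counting argument where a $\Z/p\Z$-action on a finite set has fixed points corresponding to the "honest" $pm$-hook removals.

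I expect the main obstacle to be making the bead-sliding bookkeeping fully rigorous — in particular, controlling the sign (height) contributions so that the fixed-point terms match with the correct sign, and verifying that the non-fixed orbits genuinely have size divisible by $p$ rather than merely size divisible by some factor of $p!$. A slicker alternative that sidesteps the sign bookkeeping is to work with symmetric functions directly: $\chi_\mu^\lambda$ is the coefficient of the Schur function $s_\lambda$ in the power-sum product $p_\mu = p_m^p \, p_\rho$, and mod $p$ one has $p_m^p \equiv p_{pm} \Mod p$ in the ring of symmetric functions (this is just the statement that the Frobenius $x \mapsto x^p$ sends $p_m = \sum x_i^m$ to $\sum x_i^{pm} = p_{pm}$, combined with the multinomial coefficients $\binom{p}{\cdot}$ vanishing mod $p$), so $p_\mu \equiv p_\nu \Mod p$ coefficientwise in the power-sum expansion; since the Schur functions form a $\Z$-basis and the transition matrix to power sums is integral, reading off the $s_\lambda$-coefficient gives $\chi_\mu^\lambda \equiv \chi_\nu^\lambda \Mod p$ immediately. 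I would likely present this symmetric-function proof as the main argument, as it reduces the whole lemma to the elementary congruence $\left(\sum_i x_i^m\right)^p \equiv \sum_i x_i^{pm} \Mod p$ plus the integrality of the character theory, with no rim-hook casework at all.
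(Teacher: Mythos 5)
The paper does not actually prove Lemma~\ref{lem2.1}; it cites Section~3 of Odlyzko--Rains and Proposition~1 of Miller, so there is no in-paper proof to compare against. Your symmetric-function argument is correct and is the cleanest route: writing $p_\mu = p_m^p\, p_\rho$ and $p_\nu = p_{pm}\, p_\rho$, the identity $p_m^p - p_{pm} = \bigl(\sum_i x_i^m\bigr)^p - \sum_i x_i^{pm}$ has all monomial coefficients divisible by $p$ (multinomial coefficients $\binom{p}{a_1,\dots}$ with more than one nonzero $a_i$ vanish mod $p$), hence $p_\mu - p_\nu \in p\,\Lambda_\Z$; since the Schur functions are a $\Z$-basis of $\Lambda_\Z$ and $\chi_\mu^\lambda$ is the $s_\lambda$-coefficient of $p_\mu$, it follows that $\chi_\mu^\lambda - \chi_\nu^\lambda \in p\Z$. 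One small phrasing correction: what you need is not that "the transition matrix to power sums is integral" (the expansion of $s_\lambda$ in the $p_\mu$ involves $1/z_\mu$ and is not integral), but only that $p_\mu - p_\nu$ lies in $p\,\Lambda_\Z$ and that Schurs form a $\Z$-basis, which together force the Schur coefficients of $p_\mu - p_\nu$ to be integers divisible by $p$. Your first sketch via Murnaghan--Nakayama and bead-sliding is a plausible plan but, as you note yourself, the sign and orbit-size bookkeeping would need real work to close; the symmetric-function proof you settle on avoids all of that and is the one to present.
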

To state the next result, we will need to recall some concepts from the study of partitions. For any box $u$ in the Young diagram of a partition $\lambda$, the \textit{hook-length} of $u$ is the number of boxes directly to the right of $u$ plus the number of boxes directly below $u$ plus $1$. To illustrate, the Young diagram of $\lambda=(5,3,2,1,1)$ below has each box labeled with its hook-length.
\begin{center}
	\begin{figure}[h]
		\ytableausetup{mathmode}
		\begin{ytableau}
		9 & 6 & 4 & 2 & 1 \\
		6 & 3 & 1 \\
                4 & 1 \\
		2 \\
                1 \\
		\end{ytableau}
		\caption{Hook-lengths for $\lambda=(5,3,2,1,1)$.}
                \label{fig1}
	\end{figure}
\end{center}

For any positive integer $t$, we say that a partition is a \textit{$t$-core} if its Young diagram contains no hooks of length divisible by $t$. From Figure~\ref{fig1} we see that $(5,3,2,1,1)$ is a $5$-core. Because a Young diagram has a border strip of length $t$ if and only if it has $t$ as a hook-length, the following result is an immediate consequence of the Murnaghan--Nakayama rule (see also the proof of Theorem~1 of~\cite{Miller2019}).
\begin{lemma}\label{lem2.2}
Let $\lambda$ and $\mu$ be partitions of $n$. If $\mu$ has a part of size $t$ and $\lambda$ is a $t$-core, then $\chi_{\mu}^\lambda=0$.
\end{lemma}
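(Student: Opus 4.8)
The plan is to apply the Murnaghan--Nakayama rule with the part of size $t$ singled out for peeling. Since $\mu$ has a part equal to $t$ by hypothesis, write $\mu\setminus t$ for the partition of $n-t$ obtained from $\mu$ by deleting one such part. The rule then gives
\[
\chi_\mu^\lambda=\sum_{\xi}(-1)^{\mathrm{ht}(\lambda/\xi)}\,\chi_{\mu\setminus t}^\xi,
\]
where $\xi$ ranges over all partitions of $n-t$ whose Young diagram is contained in that of $\lambda$ with complement $\lambda/\xi$ a border strip of length $t$, and $\mathrm{ht}(\lambda/\xi)$ is one less than the number of rows this border strip meets.

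First I would invoke the dictionary between border strips and hook-lengths recalled just above the statement: the Young diagram of $\lambda$ admits a border strip of length $t$ precisely when $t$ occurs among its hook-lengths. Since $\lambda$ is a $t$-core, no hook-length of $\lambda$ is divisible by $t$, so in particular $t$ itself is not a hook-length of $\lambda$; consequently there is no partition $\xi$ for which $\lambda/\xi$ is a border strip of length $t$. Hence the index set of the sum above is empty and $\chi_\mu^\lambda=0$, whatever the partition $\mu\setminus t$ happens to be.

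There is really no serious obstacle here: once the Murnaghan--Nakayama rule and the border-strip/hook-length correspondence are in hand, the argument is essentially a one-line observation. The only points to be mildly careful about are that the hypothesis guarantees $t$ is a genuine part of $\mu$, so that we are entitled to peel it off and apply the rule, and that being a $t$-core rules out hooks of length exactly $t$ (a special case of hooks of length divisible by $t$), which is exactly what forces the Murnaghan--Nakayama sum to be empty.
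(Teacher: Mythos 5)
Your proof is correct and is exactly the argument the paper has in mind: the paper declares the lemma an immediate consequence of the Murnaghan--Nakayama rule, having just noted the border-strip/hook-length correspondence, and you have simply written out that one-step deduction. Nothing is missing and nothing is different.
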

Finally, we will also need the fact that most partitions of $n$ are $t$-cores whenever $t$ is substantially larger than $\f{\sqrt{6}}{2\pi}\sqrt{n}\log{n}$, which happens to be the expected size of the largest part of a random partition of $n$.

\begin{lemma}\label{lem2.3}
  Suppose that $t=\f{\sqrt{6}}{2\pi}\sqrt{n}\log{n}+\f{\sqrt{6}}{\pi}M\sqrt{n}$ for some $0\leq M\leq 10\log{n}$. Then there are at least $p(n)(1-O(\f{\log{n}}{e^{M}}))$ $t$-core partitions of $n$.
\end{lemma}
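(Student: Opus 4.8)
The plan is to estimate the number of partitions of $n$ that are \emph{not} $t$-cores by a first-moment argument and divide by $p(n)$. Since a Young diagram contains a border strip of length $t$ exactly when it has a box of hook-length $t$, a partition $\lambda\vdash n$ fails to be a $t$-core if and only if it has at least one box of hook-length $t$. Letting $X_t(\lambda)$ denote the number of such boxes, the bound that the indicator of $\{X_t(\lambda)\ge 1\}$ is at most $X_t(\lambda)$ gives
\[
\#\{\lambda\vdash n:\ \lambda\text{ is not a }t\text{-core}\}\ \le\ \sum_{\lambda\vdash n}X_t(\lambda).
\]
Now, removing from $\lambda$ the border strip attached to a box $c$ of hook-length $t$ produces a partition of $n-t$, and the assignment $(\lambda,c)\mapsto(\lambda\setminus B_c,\,B_c)$ is a bijection between pairs $(\lambda,c)$ with $\lambda\vdash n$ and $c$ a box of hook-length $t$ in $\lambda$, and pairs $(\mu,S)$ with $\mu\vdash n-t$ and $S$ a border strip of length $t$ that can be adjoined to $\mu$. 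Hence $\sum_{\lambda\vdash n}X_t(\lambda)=\sum_{\mu\vdash n-t}r_t(\mu)$, where $r_t(\mu)$ is the number of border strips of length $t$ adjoinable to $\mu$.

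To bound $r_t(\mu)$ I would use the standard encoding of $\mu$ by its doubly-infinite boundary word in $\{0,1\}$ (equivalently, its one-runner abacus): adjoining a border strip of length $t$ amounts to swapping a $1$ at some position with a $0$ exactly $t$ positions further along, and outside a window of length $\mu_1+\mu_1'$ the word is constant. Sorting the admissible swaps by whether each of the two affected positions lies inside or outside this window gives $r_t(\mu)=O(\mu_1+\mu_1'+t)$. The dependence on $t$ is real --- for instance $r_t(\varnothing)=t$ --- but it is harmless here, since $t=O(\sqrt{n}\log n)$ throughout the stated range of $M$. Summing over $\mu$ and using conjugation symmetry to trade $\mu_1'$ for $\mu_1$,
\[
\sum_{\mu\vdash n-t}r_t(\mu)\ =\ O\!\Big(\ \sum_{\mu\vdash n-t}\mu_1\ +\ t\,p(n-t)\ \Big).
\]

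Two inputs remain. First, since $t=o(n)$, the Hardy--Ramanujan estimate $p(m)\sim \tfrac{1}{4m\sqrt 3}e^{\pi\sqrt{2m/3}}$ gives $\log p(n)-\log p(n-t)=\tfrac{\pi t}{\sqrt{6n}}+o(1)=\tfrac12\log n+M+o(1)$, so $p(n-t)=(1+o(1))\,e^{-M}p(n)/\sqrt{n}$. Second, I claim $\sum_{\mu\vdash m}\mu_1=O(\sqrt m\log m\cdot p(m))$, i.e.\ the average largest part of a uniformly random partition of $m$ is $O(\sqrt m\log m)$; to see this, write $\E[\mu_1]=\sum_{k\ge 1}\Pr(\mu_1\ge k)$, note that $\mu_1\ge k$ forces $\mu$ to have a part of size $\ge k$, so $\Pr(\mu_1\ge k)$ is at most the expected number of parts of $\mu$ of size $\ge k$, which by the bijective identity $\Pr(\mu\text{ has }\ge i\text{ parts equal to }j)=p(m-ij)/p(m)$ equals $\tfrac{1}{p(m)}\sum_{j\ge k}\sum_{i\ge 1}p(m-ij)$; the elementary inequality $p(m-d)\le p(m)e^{-cd/\sqrt m}$ (for an absolute $c>0$, from concavity of $\sqrt{\cdot}$ in the Hardy--Ramanujan estimate) makes this double sum geometric and yields $\Pr(\mu_1\ge k)\le C\sqrt m\,e^{-ck/\sqrt m}$ once $k\ge\sqrt m$, and summing over $k$ (the terms are $\le 1$ until $k$ reaches order $\sqrt m\log m$, then decay geometrically) gives the claim. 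One could instead quote the Erd\H{o}s--Lehner theorem on the largest part. Combining, $\sum_{\mu\vdash n-t}r_t(\mu)=O(\sqrt n\log n)\cdot p(n-t)=O(\sqrt n\log n)\cdot e^{-M}p(n)/\sqrt{n}=O(\log n\cdot e^{-M})\,p(n)$, so the proportion of partitions of $n$ that are not $t$-cores is $O(\log n/e^M)$; taking complements gives the lemma.

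The main obstacle is the second input above. The union bound and the border-strip bijection are soft, and the bound on $r_t(\mu)$ only needs enough care to notice the $t$-term and to check it is affordable in the present range of $M$. But a crude bound like $r_t(\mu)\le n$ would give only a proportion $O(\sqrt n/e^M)$ of non-$t$-cores, which is too weak by roughly a factor $\sqrt n/\log n$; so one genuinely needs a sharp (up to constants) tail estimate for the largest part --- equivalently, for the number of parts --- of a random partition.
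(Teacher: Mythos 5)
Your proof is correct and self-contained, but it takes a different route from the paper's, which is very short: the paper cites Lemma~5 of Morotti~\cite{Morotti2020}, asserting that the number of $t$-core partitions of $n$ is at least $p(n)-(t+1)p(n-t)$, and then simply substitutes the leading term of Rademacher's formula for $p$ to see that $(t+1)p(n-t)/p(n)=O(\log n/e^M)$. You instead reprove a bound of the same quality from scratch: the union bound reduces counting non-$t$-cores to bounding $\sum_{\mu\vdash n-t}r_t(\mu)$, the abacus picture gives $r_t(\mu)=O(\mu_1+\mu_1'+t)$, and a tail estimate for the largest part of a random partition yields $\sum_{\mu\vdash m}\mu_1=O(\sqrt m\log m\cdot p(m))$; together these give $\sum_{\mu\vdash n-t}r_t(\mu)=O(\sqrt n\log n)\,p(n-t)$, matching the order of Morotti's $(t+1)p(n-t)$ since $t\asymp\sqrt n\log n$, after which the Rademacher/Hardy--Ramanujan computation is identical to the paper's. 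Your approach avoids the external citation, at the cost of needing the extra Erd\H{o}s--Lehner-type tail bound on $\mu_1$, which the paper does not require. Incidentally, your estimate for $r_t(\mu)$ can be tightened to an identity: if $d_t(\mu)$ denotes the number of removable $t$-rim hooks of $\mu$, then a telescoping sum over the abacus word shows $r_t(\mu)=t+d_t(\mu)$, and $d_t(\mu)\le\min(\mu_1,\mu_1')$ because hook lengths within any row (or column) are distinct; this reaches the same conclusion with a bit less case analysis in the ``window'' argument.
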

\begin{proof}
  From Lemma~5 of~\cite{Morotti2020}, the number of $t$-core partitions of $n$ is at least $p(n)-(t+1)p(n-t)$. By taking the first term in Rademacher's formula~\cite{Rademacher1937} for $p(n)$, one gets the explicit estimate
\[
  p(n)=\f{1+O(n^{-1/2})}{4\sqrt{3}n}e^{\pi\sqrt{2n/3}
  }.
\]
Thus,
\begin{align*}
  1-(t+1)\f{p(n-t)}{p(n)}&= 1-O\left(\sqrt{n}\log{n}\cdot\exp\left(\pi\sqrt{\f{2}{3}}\cdot\left(\sqrt{n-t}-\sqrt{n}\right)\right)\right) \\
  &= 1-O\left(\sqrt{n}\log{n}\cdot\exp\left(\pi\sqrt{\f{2n}{3}}\cdot\left(\sqrt{1-\f{t}{n}}-1\right)\right)\right) \\
                          &= 1-O\left(\sqrt{n}\log{n}\cdot\exp\left(-\f{\pi}{\sqrt{6}}\cdot\f{t}{\sqrt{n}}\right)\right),
\end{align*}
which equals $1-O(\f{\log{n}}{e^M})$ by recalling the expression for $t$.
\end{proof}
For context, Erd\H{o}s and Lehner~\cite{ErdosLehner1941} showed that a random partition of $n$ has a part of size at least $\f{\sqrt{6}}{2\pi}\sqrt{n}\log{n}+\f{\sqrt{6}}{\pi}\log{\f{\sqrt{6}}{\pi}}+\f{\sqrt{6}}{\pi}M\sqrt{n}$ with probability $1-e^{-e^{-M}}$, so the largest part $\mu_1$ of a typical partition $\mu$ of $n$ will not be large enough for Lemma~\ref{lem2.3} to give a nontrivial lower bound on the number of partitions of $n$ are $\mu_1$-cores. Lemmas~\ref{lem2.2} and~\ref{lem2.3} taken together only guarantee that the character table of $S_n$ contains a $\Omega(\f{1}{\log{n}})$-proportion of zeros.

Now fix a prime $p$. For any partition $\mu$ of $n$, let $\tilde{\mu}$ denote the partition of $n$ obtained by repeatedly replacing any $p$ parts of the same size $m$ in $\mu$ with one part of size $pm$ until there are no parts appearing more than $p-1$ times. For example, if $p=2$ and $\mu=(6,2,1,1,1)$, then $\tilde{\mu}=(6,4,1)$. By Lemma~\ref{lem2.1}, if $\chi^\lambda_{\tilde{\mu}}=0$, then $\chi_{\mu}^\lambda$ is a multiple of $p$. Our strategy will thus be to show that $\chi_{\tilde{\mu}}^\lambda=0$ for almost all pairs of partitions $\lambda$ and $\mu$ of $n$. We will do this by showing that the partition $\tilde{\mu}$ typically has largest part $\tilde{\mu}_1$ significantly larger than the largest part of a random partition when $p\leq 13$ -- so large that almost all partitions of $n$ are $\tilde{\mu}_1$-cores. Theorems~\ref{thm1.1} and~\ref{thm1.2} will then follow from Lemmas~\ref{lem2.1},~\ref{lem2.2}, and~\ref{lem2.3}.

To prove that $\tilde{\mu}_1$ is typically large, we study the following partition statistic. For any positive integer $k$ relatively prime to $p$ and partition $\mu$ of $n$, set
\[
M_{\mu}^{(k)}:=\sum_{\substack{\mu_i\in\mu \\ \mu_i=kp^j}}\mu_i.
\]
So, $M_{\mu}^{(k)}$ is the sum of all parts of $\mu$ of the form $k$ times a power of $p$. For example, when $p=2$ we have $M_{(6,2,1,1,1)}^{(1)}=5=M_{(6,4,1)}^{(1)}$. Note that not only does $M^{(k)}_{\mu}=M^{(k)}_{\tilde{\mu}}$ for all $k$ and partitions $\mu$ more generally, but also that the parts of the form $kp^j$ appearing in $\tilde{\mu}$ can be read off from the base-$p$ expansion of $\f{1}{k}M_{\tilde{\mu}}^{(k)}$. That is, the base-$p$ expansion of $\f{1}{k}M_{\mu}^{(k)}$ has an $i$ in the $p^j$ place if and only if $\tilde{\mu}$ has $i$ parts of size $kp^j$. It thus suffices to show that there is a $\gamma_p>0$ such that, for almost every $\mu$, there exists some $k$ for which
\begin{equation}\label{eq2.1}
\left\lfloor \log_p{\f{M_\mu^{(k)}}{k}}\right\rfloor\geq \log_p\f{(1+\gamma_p)\f{\sqrt{6}}{2\pi}\sqrt{n}\log{n}}{k},
\end{equation}
since this will guarantee that $\tilde{\mu}$ has a part of size at least $(1+\gamma_p)\f{\sqrt{6}}{2\pi}\sqrt{n}\log{n}$, which is easily large enough for Lemma~\ref{lem2.3} to give us a strong bound on the number of partitions that are not $\tilde{\mu}_1$-cores.

For each fixed $k$, the quantity $M_{\mu}^{(k)}$ turns out to have average size approximately equal to $\f{\sqrt{6}}{2\pi\log{p}}\sqrt{n}\log{n}$. Note that $\f{1}{\log{2}}>1$, while $\f{1}{\log{p}}<1$ whenever $p>2$. Thus, to prove Theorem~\ref{thm1.1}, it will suffice to show that $M^{(k)}_{\mu}$ is typically close to its average for enough values of $k$ that~\eqref{eq2.1} must hold for one of them. It turns out that proving this for $k=1,3,$ and $5$ is enough, so we will just have to compute the first and second moments of $M^{(1)}_{\mu},M^{(3)}_{\mu},$ and $M^{(5)}_{\mu}$ and invoke Chebyshev's inequality. Section~\ref{sec3} contains the details of this argument.

Since $M_\mu^{(k)}$ is typically small for each individual $k$ when $p>2$, we will need to consider many $k$'s simultaneously to handle larger primes. It turns out that, for each fixed $k\leq n^{1/2-\ve_1(p)}$, the quantity $M_\mu^{(k)}$ has size $<(1+\ve_2(p))\f{\sqrt{6}}{2\pi}\sqrt{n}\log{n}$ with probability at most $1-\f{1}{n^{1/2-\ve_3(p)}}$ for some $\ve_1(p),\ve_2(p)$, and $\ve_3(p)$ satisfying $\ve_3(p)>\ve_1(p)$. Thus, if the $M_\mu^{(k)}$ were all independent for $k$ up to $n^{1/2-\ve_1(p)}$, we would be able to prove Theorem~\ref{thm1.2} for all primes $p$. However, we can only prove independence when $\ve_1(p)>1/4$, and it turns out that $\ve_1(p)>1/4$ if and only if $p\leq 13$, which explains the restriction to these primes in Theorem~\ref{thm1.2}. Section~\ref{sec4} contains the details of this argument.

\section{Proof of Theorem~\ref{thm1.1}}\label{sec3}
Fix $p=2$ for this section and let $\sum_{\mu\vdash n}$ and $\E_{\mu\vdash n}$ denote the sum and average, respectively, over partitions $\mu$ of $n$. In the following lemma, we compute the first two moments of $M_\mu^{(k)}$.
\begin{lemma}\label{lem3.1}
  For any odd $k$, we have
  \[
\E_{\mu\vdash n}M_\mu^{(k)}=\f{\sqrt{6}}{2\pi\log{2}}\sqrt{n}\log{n}(1+o_k(1))
  \]
  and
  \[
\E_{\mu\vdash n}(M_{\mu}^{(k)})^2=\left(\f{\sqrt{6}}{2\pi\log{2}}\sqrt{n}\log{n}\right)^2(1+o_k(1)).
  \]
\end{lemma}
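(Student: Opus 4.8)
The plan is to compute both moments by interchanging summation and the expectation over partitions, reducing everything to counting partitions of $n$ with prescribed multiplicities of a fixed part. Concretely, for the first moment I would write
\[
\E_{\mu\vdash n}M_\mu^{(k)}=\f{1}{p(n)}\sum_{j\geq 0}kp^j\sum_{\mu\vdash n}(\text{number of parts of }\mu\text{ equal to }kp^j),
\]
and recall the standard fact that $\sum_{\mu\vdash n}(\text{multiplicity of the part }m\text{ in }\mu)=\sum_{i\geq 1}p(n-im)$, so that $\E_{\mu\vdash n}(\text{mult. of }m)=\sum_{i\geq 1}p(n-im)/p(n)$. Using the first-term Rademacher estimate for $p(n)$ from the proof of Lemma~\ref{lem2.3}, one has $p(n-im)/p(n)\approx e^{-ci m/\sqrt n}$ with $c=\pi/\sqrt 6$, so $\E_{\mu\vdash n}(\text{mult. of }m)\approx 1/(e^{cm/\sqrt n}-1)\approx \sqrt n/(cm)$ for $m$ up to order $\sqrt n\log n$, and is exponentially small beyond that. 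Summing $kp^j$ times this over $j$ with $m=kp^j$, the $m$ cancels and each $j$ with $kp^j\lesssim \sqrt n\log n$ contributes roughly $\sqrt n/c$; there are about $\log_p(\sqrt n\log n/k)\sim \tfrac12\log_p n$ such terms, giving $\E_{\mu\vdash n}M_\mu^{(k)}\sim \tfrac{\sqrt n}{2c\log p}\log n=\f{\sqrt 6}{2\pi\log p}\sqrt n\log n$, which for $p=2$ is the claimed main term. I would need to be careful to show the tail $j$ with $kp^j$ large contributes only $o_k(\sqrt n\log n)$ and that the small $j$ (where the $1/(e^{cm/\sqrt n}-1)\approx \sqrt n/(cm)$ approximation degrades) contribute $O_k(\sqrt n)=o_k(\sqrt n \log n)$.

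For the second moment the same philosophy applies but now I must handle cross terms. Expanding,
\[
\E_{\mu\vdash n}(M_\mu^{(k)})^2=\f{1}{p(n)}\sum_{j_1,j_2\geq 0}kp^{j_1}\cdot kp^{j_2}\sum_{\mu\vdash n}a_{j_1}(\mu)a_{j_2}(\mu),
\]
where $a_j(\mu)$ is the multiplicity of the part $kp^j$ in $\mu$. For $j_1\neq j_2$ these are multiplicities of distinct part sizes, and $\sum_{\mu\vdash n}a_{j_1}(\mu)a_{j_2}(\mu)=\sum_{i_1,i_2\geq 1}p(n-i_1kp^{j_1}-i_2kp^{j_2})$; dividing by $p(n)$ and using the Rademacher estimate this factorizes asymptotically as $\bigl(\sum_{i_1}e^{-c i_1 kp^{j_1}/\sqrt n}\bigr)\bigl(\sum_{i_2}e^{-c i_2 kp^{j_2}/\sqrt n}\bigr)(1+o(1))$ in the relevant range, because $\sqrt{n-a-b}-\sqrt n = (\sqrt{n-a}-\sqrt n)+(\sqrt{n-b}-\sqrt n)+O((a+b)^2/n^{3/2})$ and the error is negligible when $a,b=O(\sqrt n\log n)$. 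Hence the off-diagonal contribution is asymptotically $(\E_{\mu\vdash n}M_\mu^{(k)})^2(1+o_k(1))$, matching the claim. For the diagonal $j_1=j_2=j$ one needs $\sum_\mu a_j(\mu)^2$, i.e. the second moment of a single multiplicity, which is $\sum_{i\geq 1}(2i-1)p(n-ikp^j)$; multiplied by $(kp^j)^2$ and summed over $j$, a quick estimate shows the diagonal is $O_k(n)=o_k((\sqrt n\log n)^2)$ and so is absorbed into the error term.

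The main technical obstacle is making the factorization of the two-part counting function uniform: one must show that $p(n-a-b)/p(n)=(p(n-a)/p(n))(p(n-b)/p(n))(1+o(1))$ with an error that is $o(1)$ after summing over all $i_1,i_2,j_1,j_2$ in play, and in particular that the ranges $j$ with $kp^j$ up to (slightly beyond) $\sqrt n\log n$ dominate while everything with $kp^j$ much larger than $\sqrt n\log n$ contributes a total that is $o((\sqrt n\log n)^2)$. This is where the explicit Rademacher estimate $p(n)=\tfrac{1+O(n^{-1/2})}{4\sqrt 3 n}e^{\pi\sqrt{2n/3}}$ and the expansion $\pi\sqrt{2(n-m)/3}-\pi\sqrt{2n/3}=-\tfrac{\pi}{\sqrt 6}\cdot\tfrac{m}{\sqrt n}+O(m^2/n^{3/2})$ do all the work; the $o_k(1)$ in the statement (rather than a uniform $o(1)$) is exactly what lets me be cavalier about constants depending on $k$ and absorb the boundary terms. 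Once the factorization and the tail bounds are in hand, both displayed formulas follow by combining the geometric-type sums as sketched above.
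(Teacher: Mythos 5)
Your proposal is correct in outline and takes a genuinely more elementary route than the paper. The paper encodes the two moments via generating functions $P(x)F_k(x)$ and $P(x)G_k(x)$, then appeals to the general saddle-point theorem of Grabner, Knopfmacher, and Wagner (their Theorem~2.2) to reduce the asymptotics to evaluating $F_k$ and $G_k$ at $e^{-\pi/\sqrt{6n}}$, which it does by Mellin analysis of the harmonic sums $\sum_j k2^j e^{-tk2^j}/(1-e^{-tk2^j})$. You instead bypass generating functions and Mellin transforms entirely: you expand the moments directly by linearity, use the identities $\sum_{\mu\vdash n}(\text{mult.\ of }m)=\sum_{i\ge1}p(n-im)$ and its two-part analogue, and plug in the first-term Rademacher estimate $p(n-m)/p(n)\approx e^{-\pi m/\sqrt{6n}}$ to evaluate the resulting double and quadruple sums, with the factorization $p(n-a-b)/p(n)\approx(p(n-a)/p(n))(p(n-b)/p(n))$ valid because the cross term $\sqrt{n-a-b}-\sqrt{n-a}-\sqrt{n-b}+\sqrt{n}=O(ab/n^{3/2})$ is negligible for $a,b=O(\sqrt{n}\log n)$. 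This is a fully legitimate alternative: it is more hands-on about error control but avoids citing GKW and the contour-shifting. One small arithmetic slip: the diagonal contribution to $\E(M_\mu^{(k)})^2$, namely $\sum_j(kp^j)^2\sum_i(2i-1)p(n-ikp^j)/p(n)$, is actually $\Theta_k(n\log n)$ rather than $O_k(n)$ (each of the $\approx\tfrac{1}{2}\log_p n$ small values of $j$ contributes $\approx 2n/c^2$ with $c=\pi/\sqrt6$), but since $n\log n=o(n(\log n)^2)$ this does not affect the conclusion. The one genuine technical point you correctly flag --- making the factorization uniform over the full ranges of $i_1,i_2,j_1,j_2$, splitting off $m\gtrsim n/2$ where the Rademacher approximation is no longer a good pointwise estimate --- is real and would need to be done, but the exponential decay and crude bounds on $p$ for small arguments handle it, so there is no gap in the approach.
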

\begin{proof}
  Let $P(x):=\sum_{n=0}^\infty p(n)x^n$ denote the generating function of the number of partitions of $n$ and
\[
  H_k(x,u):=\prod_{j=0}^{\infty}\f{1}{1-(ux)^{k2^j}}\prod_{\ell\neq k2^j}\f{1}{1-x^\ell}=\sum_{n=1}^\infty\sum_{m=0}^\infty h(n,m)x^nu^{m}
\]
denote the bivariate generating function of the number of partitions $\mu$ of $n$ with $M_{\mu}^{(k)}=m$. By logarithmically differentiating $H_k(x,u)$ with respect to $u$ and then setting $u=1$, one sees that the generating function for $\sum_{\mu\vdash n}M^{(k)}_{\mu}$ is
  \[
P(x)\sum_{j=0}^{\infty}\f{k2^jx^{k2^j}}{1-x^{k2^j}}=:P(x)F_k(x).
\]
Similarly, by logarithmically differentiating $H_k(x,u)$ twice with respect to $u$ and then setting $u=1$, one also sees that the generating function for $\sum_{\mu\vdash n}(M^{(k)}_{\mu})^2$ is
  \[
P(x)\left(F_k(x)^2+\sum_{j=0}^\infty\f{k^24^jx^{k2^j}}{(1-x^{k2^j})^2}\right)=:P(x)G_k(x).
\]

Asymptotics for $\sum_{\mu\vdash n}M^{(k)}_{\mu}$ and $\sum_{\mu\vdash n}(M^{(k)}_{\mu})^2$ can be gotten by a standard saddle-point argument. The argument is so standard, in fact, that the sort of saddle-point analysis that we would need to do has already been carried out in general by Grabner, Knopfmacher, and Wagner~\cite{GrabnerKnopfmacherWagner2014} for any quantity whose generating function takes the form $P(x)H(x)$ with $H$ sufficiently well-behaved:
\begin{theorem}[Theorem~2.2 of~\cite{GrabnerKnopfmacherWagner2014}]\label{thm3.2}
  Suppose that $H$ is a function on the complex disk satisfying
  \begin{enumerate}
  \item $|H(z)|=O\left(\exp\left(\f{C}{1-|z|^\eta}\right)\right)$ for some $C>0$ and $0<\eta<1$ as $|z|\to 1^-$, and
\item $\f{H(e^{-t+iu})}{H(e^{-t})}\to 1$ uniformly in $|u|\leq C' t^{1+\eta'}$ as $t\to 0^+$ for some $C'>0$ and $0<\eta'<\f{1-\eta}{2}$.
  \end{enumerate}
  Then the $n^{th}$ coefficient of the generating function $P(x)H(x)$ equals
  \[
p(n)H(e^{-\pi/\sqrt{6n}})(1+o(1))+O(\exp(-C''n^{1/2-\eta'}))
  \]
  for some constant $C''>0$.\footnote{The dependence of the $o(1)$ term on the estimates in (1) and (2) is not made explicit in the statement of the result in~\cite{GrabnerKnopfmacherWagner2014}, though a short inspection of the proof shows that the dependence is good enough to give us bounds of $O_k(\f{1}{\log{n}})$ for the $o_k(1)$ terms in the conclusion of Lemma~\ref{lem3.1}. This would ultimately also lead to a bound of $O(\f{1}{\log{n}})$ in Theorem~\ref{thm1.1}. Since we get a stronger bound in Theorem~\ref{thm1.2}, we will not put any effort into obtaining explicit bounds for the $o_k(1)$ terms in Lemma~\ref{lem3.1}.}
\end{theorem}

It thus remains to verify that $F_k$ and $G_k$ satisfy the hypotheses of Theorem~\ref{thm3.2} and estimate $F_k(e^{-\pi/\sqrt{6n}})$ and $G_k(e^{-\pi/\sqrt{6n}})$. Noting that $F_k(e^{-v})=\sum_{j=0}^\infty\f{k2^je^{-vk2^j}}{1-e^{-vk2^j}}$ converges whenever $\RE{v}>0$ and using basic properties of the Mellin transform (as can be found in~\cite{FlajoletGourdonDumas1995}), we see that $F_k(e^{-t})$ has Mellin transform equal to
\[
\mathcal{M}[F_k](s):=\f{k^{-(s-1)}}{1-2^{-(s-1)}}\Gamma(s)\zeta(s),
\]
and thus equals
\begin{equation}\label{eq3.1}
F_k(e^{-t})=\f{1}{2\pi i}\int_{2-i\infty}^{2+i\infty}\f{k^{-(s-1)}}{1-2^{-(s-1)}}\Gamma(s)\zeta(s)t^{-s}ds
\end{equation}
whenever $t>0$ by Mellin inversion. The function $\mathcal{M}[F_k](s)t^{-s}$ has a double pole at $s=1$, simple poles at $s=0,-1,-3,-5,\dots$, and simple poles at $s=1+\f{2\pi i\ell}{\log{2}}$ for $\ell\in\Z\setminus\{0\}$. The pole at $s=1$ has residue
\[
-\f{\log{t}+\log{k}}{t\log{2}}+\f{1}{2t},
\]
the poles at $-m$ for $m\in\Z_{\geq 0}\setminus 2\N$ have residue
\[
\f{(-1)^mk^{m+1}\zeta(-m)}{m!(1-2^{m+1})}t^m,
\]
and the poles at $1+\f{2\pi i\ell}{\log{2}}$ for $\ell\in\Z\setminus\{0\}$ have residue
\[
\f{k^{-\f{2\pi i\ell}{\log{2}}}\Gamma(1+\f{2\pi i\ell}{\log{2}})\zeta(1+\f{2\pi i\ell}{\log{2}})}{\log{2}}t^{-1-\f{2\pi i\ell}{\log{2}}}.
\]
We use the rapid decay of $\Gamma$ in vertical strips to push the contour in~\eqref{eq3.1} all the way to the left to thus deduce that $F_k(e^{-t})$ equals
\[
-\f{\log{t}+\log{k}}{t\log{2}}+\f{1}{2t}+\sum_{m=0}^\infty\f{(-1)^mk^{m+1}\zeta(-m)}{m!(1-2^{m+1})}t^m+\sum_{\ell\neq 0}\f{k^{-\f{2\pi i\ell}{\log{2}}}\Gamma(1+\f{2\pi i\ell}{\log{2}})\zeta(1+\f{2\pi i\ell}{\log{2}})}{t\log{2}}t^{-\f{2\pi i\ell}{\log{2}}},
\]
from which it follows that $F_k(e^{-v})=-\f{\log{v}}{v\log{2}}+O_k(\f{1}{|v|})$ when $|\IM{v}|\leq \RE{v}$ as $\RE{v}\to 0^+$. A similar analysis shows that
$G_k(e^{-t})-F_k(e^{-t})^2$ equals
\[
-\f{\log{t}+\log{k}}{t^2\log{2}}+\f{2+\log{2}}{2t^2\log{2}}+\sum_{m=0}^\infty\f{(-1)^{m}k^{m+2}\zeta(-m-1)}{m!(1-2^{m+2})}t^{m}+\sum_{\ell\neq 0}\f{k^{-\f{2\pi i\ell}{\log{2}}}\Gamma(2+\f{2\pi i\ell}{\log{2}})\zeta(1+\f{2\pi i\ell}{\log{2}})}{t^2\log{2}}t^{-\f{2\pi i\ell}{\log{2}}},
\]
from which it follows that $G_k(e^{-v})=(\f{\log{v}}{v\log{2}})^2+O_k(\f{|\log{v}|}{|v|^2})$ when $|\IM(v)|\leq \RE{v}$ as $\RE{v}\to 0^+$.

Since $|F_k(e^{-t+iu})|\leq F_k(e^{-t})$ and $|G_k(e^{-t+iu})|\leq G_k(e^{-t})$ for all $t>0$ and $u\in[0,2 \pi)$ by our original expressions for $F_k$ and $G_k$, the asymptotic expressions for $F_k(e^{-t})$ and $G_k(e^{-t})$ imply that condition~(1) of Theorem~\ref{thm3.2} is easily satisfied for $H=F_k$ and $H=G_k$ for any $\eta>0$. When $|u|\leq t^{4/3}$ (say) we also get that $\f{F_k(e^{-t+iu})}{F_k(e^{-t})}=1+O_k(\f{1}{|\log{t}|})$ and $\f{G_k(e^{-t+iu})}{G_k(e^{-t})}=1+O_k(\f{1}{|\log{t}|})$, so that condition~(2) of Theorem~\ref{thm3.2} is satisfied as well. The lemma now follows by noting that $F_k(e^{-\pi/\sqrt{6n}})=\f{\sqrt{6}}{2\pi\log{2}}\sqrt{n}\log{n}(1+o_k(1))$ and $G_k(e^{-\pi/\sqrt{6n}})=(\f{\sqrt{6}}{2\pi\log{2}}\sqrt{n}\log{n})^2(1+o_k(1))$.
\end{proof}

Now we can prove Theorem~\ref{thm1.1}.
\begin{proof}[Proof of Theorem~\ref{thm1.1}]
  By Lemma~\ref{lem3.1} and Chebyshev's inequality, we get that
\[
M_\mu^{(1)},M_\mu^{(3)},M_\mu^{(5)}\geq\left(\f{1}{\log{2}}-\f{1}{100}\right)\f{\sqrt{6}}{2\pi}\sqrt{n}\log{n}
\]
for all but a $o(1)$-proportion of partitions $\mu$ of $n$. For such $\mu$, we thus have
  \begin{align*}
\left\lfloor\log_2\f{M_{\mu}^{(k)}}{k}\right\rfloor&\geq \log_2\f{(\f{1}{\log{2}}-\f{1}{100})\f{\sqrt{6}}{2\pi}\sqrt{n}\log{n}}{k}-\left\{\log_2\f{(\f{1}{\log{2}}-\f{1}{100})\f{\sqrt{6}}{2\pi}\sqrt{n}\log{n}}{k}\right\}
 \\
&=\log_2\f{(1+\f{1}{100})\f{\sqrt{6}}{2\pi}\sqrt{n}\log{n}}{k}+\log_2\f{\f{1}{\log{2}}-\f{1}{100}}{1+\f{1}{100}}-\left\{\log_2\f{(\f{1}{\log{2}}-\f{1}{100})\f{\sqrt{6}}{2\pi}\sqrt{n}\log{n}}{k}\right\}
\end{align*}
for $k=1,3,$ and $5$, where $\{r\}$ denotes the fractional part of $r$. Note that $\log_21=0$, $\log_23=1.58\dots$, $\log_2{5}=2.32\dots$, and $\log_2\f{\f{1}{\log{2}}-\f{1}{100}}{1+\f{1}{100}}=.504\dots$. So, for all $n$, we certainly have
\[
\left\{\log_2\f{(\f{1}{\log{2}}-\f{1}{100})\f{\sqrt{6}}{2\pi}\sqrt{n}\log{n}}{k}\right\}\leq \log_2\f{\f{1}{\log{2}}-\f{1}{100}}{1+\f{1}{100}}
\]
for at least one of $k=1,3,$ or $5$, since every $r\in[0,1)$ satisfies $\{r-\log_2{k}\}\leq \log_2\f{\f{1}{\log{2}}-\f{1}{100}}{1+\f{1}{100}}$ for at least one of $k=1,3,$ or $5$. We conclude that $\tilde{\mu}$ has a part of size at least $(1+\f{1}{100})\f{\sqrt{6}}{2\pi}\sqrt{n}\log{n}$ for all but a $o(1)$-proportion of partitions $\mu$, so that Theorem~\ref{thm1.1} follows by Lemmas~\ref{lem2.1},~\ref{lem2.2}, and~\ref{lem2.3}.
\end{proof}

\section{Proof of Theorem~\ref{thm1.2}}\label{sec4}
Let $q_p(n)$ denote the number of partitions of $n$ into powers of $p$ and, for $k_1,\dots,k_M\in\N$, let $r_{k_1,\dots,k_M;p}(n)$ denote the number of partitions of $n$ into parts not of the form $k_ip^j$. To prove Theorem~\ref{thm1.2}, it suffices to bound
\begin{equation}\label{eq4.1}
\f{1}{p(n)}\sum_{\substack{\ell_i\leq (1+\delta_p)\f{\sqrt{6}}{2\pi}\sqrt{n}\log{n} \\ k_i\mid \ell_i \\ i=1,\dots,M}}r_{k_1,\dots,k_M;p}\left(n-\sum_{i=1}^M\ell_i\right)\prod_{i=1}^Mq_p\left(\f{\ell_i}{k_i}\right)
\end{equation}
for some $k_1,\dots,k_M$ all satisfying
\[
\left\{\log_2\f{(1+\delta_p)\f{\sqrt{6}}{2\pi}\sqrt{n}\log{n}}{k_i}\right\}\leq\log\f{1+\delta_p}{1+\f{\delta_p}{2}},
\]
since, like in the proof of Theorem~\ref{thm1.1}, this will guarantee that, outside of a set of partitions $\mu$ of $n$ of density~\eqref{eq4.1}, $\tilde{\mu}$ has a part of size at least $(1+\f{\delta_p}{2})\f{\sqrt{6}}{2\pi}\sqrt{n}\log{n}$. We start by showing that, under certain conditions on $k_1,\dots,k_M$,~\eqref{eq4.1} is approximately the product of the probabilities that $M_{\mu}^{(k_i)}\leq(1+\delta_p)\f{\sqrt{6}}{2\pi}\sqrt{n}\log{n}$.

\begin{lemma}\label{lem4.1}
  Let $M\leq n^{1/4-\ve}$, $k_1,\dots,k_M\leq n^{1/4-\ve}$, and $\gamma>0$. We have
  \begin{align*}
    \f{1}{p(n)}\sum_{\substack{\ell_i\leq \gamma\sqrt{n}\log{n} \\ k_i\mid \ell_i \\ i=1,\dots,M}}&r_{k_1,\dots,k_M;p}\left(n-\sum_{i=1}^M\ell_i\right)\prod_{i=1}^Mq_p\left(\f{\ell_i}{k_i}\right)\\
   &=\prod_{i=1}^M\left(\f{1}{p(n)}\sum_{\substack{\ell_i\leq \gamma\sqrt{n}\log{n} \\ k_i\mid\ell_i}}r_{k_i,p}(n-\ell_i)q_p\left(\f{\ell_i}{k_i}\right)\right)(1+O_p(n^{-\ve}))
  \end{align*}
as $n\to\infty$.
\end{lemma}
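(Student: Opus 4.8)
The plan is to reduce the lemma to a single uniform pointwise estimate and then obtain that estimate from a version of the saddle-point analysis behind Theorem~\ref{thm3.2} that keeps one more term.

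\emph{Reduction.} Put $\Lambda:=\gamma\sqrt n\log n$. Since the $k_i$ are distinct and prime to $p$, the sets $\{k_ip^j:j\ge0\}$ are pairwise disjoint, so sorting the parts of a partition $\mu\vdash n$ according to which of these forms (if any) they have shows, after multiplying through by $p(n)$, that the left-hand side of the asserted identity equals $\mathbb{P}_{\mu\vdash n}(M_\mu^{(k_i)}\le\Lambda\text{ for all }i)$ and that its $i$-th right-hand factor equals $\mathbb{P}_{\mu\vdash n}(M_\mu^{(k_i)}\le\Lambda)$, $\mathbb{P}_{\mu\vdash n}$ denoting the uniform measure on partitions of $n$. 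For $0\le\ell_i\le\Lambda$ set $L:=\ell_1+\dots+\ell_M$ and, when $k_i\mid\ell_i$ for every $i$,
\[
\pi(\vec\ell):=\f{r_{k_1,\dots,k_M;p}(n-L)\prod_{i=1}^Mq_p(\ell_i/k_i)}{p(n)},\qquad \pi_i(\ell_i):=\f{r_{k_i;p}(n-\ell_i)\,q_p(\ell_i/k_i)}{p(n)},
\]
with $\pi(\vec\ell)=\pi_i(\ell_i)=0$ otherwise; these are exactly $\mathbb{P}(M_\mu^{(k_i)}=\ell_i\text{ for all }i)$ and $\mathbb{P}(M_\mu^{(k_i)}=\ell_i)$. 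Then the left side of the lemma is $\sum_{\vec\ell}\pi(\vec\ell)$ and the right side is $\sum_{\vec\ell}\prod_i\pi_i(\ell_i)$, both sums over $0\le\ell_i\le\Lambda$, and since $\prod_i\pi_i(\ell_i)\ge0$ it suffices to prove
\[
\f{\pi(\vec\ell)}{\prod_i\pi_i(\ell_i)}=\f{r_{k_1,\dots,k_M;p}(n-L)\,p(n)^{M-1}}{\prod_{i=1}^M r_{k_i;p}(n-\ell_i)}=1+O_p(n^{-\ve})
\]
uniformly over all $0\le\ell_i\le\Lambda$ with $k_i\mid\ell_i$ (vectors violating some divisibility condition contribute $0$ to both sides of the lemma). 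Call this bound $(\star)$.

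\emph{A sharpened saddle-point estimate.} Let $H_i(x):=\prod_{j\ge0}(1-x^{k_ip^j})$; then the generating functions $R_{k_i;p}(x):=\sum_n r_{k_i;p}(n)x^n$ and $R_{k_1,\dots,k_M;p}(x):=\sum_n r_{k_1,\dots,k_M;p}(n)x^n$ satisfy $R_{k_i;p}=P\cdot H_i$ and $R_{k_1,\dots,k_M;p}=P\cdot\prod_{i=1}^MH_i$. Write $t_m:=\pi/\sqrt{6m}$ and $\psi_F(s):=\log F(e^{-s})$. The estimate I would establish is: for every $T\subseteq\{1,\dots,M\}$,
\[
[x^m]\Big(P(x)\prod_{i\in T}H_i(x)\Big)=p(m)\Big(\prod_{i\in T}H_i(e^{-t_m})\Big)\exp\!\Big(-\f{\big(\sum_{i\in T}\psi_{H_i}'(t_m)\big)^{2}}{2\,\psi_P''(t_m)}\Big)\big(1+O(n^{-1/2+o(1)})\big),
\]
uniformly for $m=n-O(M\Lambda)$ and $M,k_i\le n^{1/4-\ve}$. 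This is the Hardy--Ramanujan--Rademacher argument carried one order beyond Theorem~\ref{thm3.2}: integrate around $|x|=e^{-t_m}$; on a central arc of angular width $n^{-3/4+o(1)}$ Taylor-expand $\psi_{P\prod_{i\in T}H_i}$, shift to the true saddle of $P\prod_{i\in T}H_i$ (at distance $O\big(\sum_{i\in T}\psi_{H_i}'(t_m)/\psi_P''(t_m)\big)$ from $t_m$), and run Laplace's method; the complementary arc is super-polynomially small once it is split into a thin collar of $e^{-t_m}$, on which $|H_i(e^{-t_m+iu})|=(1+o(1))H_i(e^{-t_m})$ while $|P|$ has already fallen to $P(e^{-t_m})e^{-n^{\Omega(1)}}$, and the remainder, on which $|P|$ is exponentially small and overwhelms the at-worst $e^{O(|T|(\log n)^2)}$ growth of $\prod_{i\in T}H_i$ that comes from $|H_i(e^{-t_m+iu})|\le\prod_j(1+e^{-t_mk_ip^j})=(t_mk_i)^{-O(1)}$. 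The error stays $O(n^{-1/2+o(1)})$ and uniform because $\psi_{H_i}'(t_m)=\sum_{j\ge0}k_ip^j/(e^{t_mk_ip^j}-1)=O(\sqrt n\log n)$ and $\psi_{H_i}^{(j)}(t_m)=O\big((\log n)\,t_m^{-j}\big)$ for each $j$, whereas $\psi_P^{(j)}(t_m)\asymp t_m^{-(j+1)}\asymp n^{(j+1)/2}$; the hypothesis $k_i\le n^{1/4-\ve}$ forces $t_mk_i\to0$, and $|T|\le M\le n^{1/4-\ve}$ makes the contribution of $\prod_{i\in T}H_i$ to each cumulant of order $\ge2$ negligible beside that of $P$. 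Note that the black-box form $[x^m](PH)=p(m)H(e^{-t_m})(1+o(1))$ of Theorem~\ref{thm3.2} is not enough here: in $(\star)$ the $M$ factors $r_{k_i;p}(n-\ell_i)$ are multiplied together, so their $o(1)$ errors would accumulate.

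\emph{Assembling $(\star)$.} Substitute the displayed estimate into $(\star)$, with $T=\{1,\dots,M\}$ in the numerator and $T=\{i\}$ in the $i$-th denominator factor; the
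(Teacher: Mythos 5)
Your proof takes a genuinely different route from the paper's, though it ends in mid‐sentence at the crucial assembly step.

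The paper's proof establishes the saddle-point estimate \eqref{eq4.3} with error term written as $O_p(m^{-\ve/2})$, and then derives the pointwise version of $(\star)$ from the two approximate multiplicativity facts $p(n-\sum_i\ell_i)/p(n)\approx\prod_i p(n-\ell_i)/p(n)$ and $H_{k_1,\dots,k_M;p}(e^{-t_{n-\sum\ell_i}})\approx\prod_iH_{k_i;p}(e^{-t_{n-\ell_i}})$. You instead propose to carry the saddle-point analysis one order further, keeping the Gaussian correction $\exp\bigl(-(\sum_{i\in T}\psi_{H_i}'(t_m))^2/(2\psi_P''(t_m))\bigr)$, and then plug this into $(\star)$ directly. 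Your worry — that $M$ factors each carrying a raw $o(1)$ error would overwhelm the estimate — is a real point the paper glosses over, since the stated $O_p(m^{-\ve/2})$ in \eqref{eq4.3} genuinely would accumulate badly when $M\approx n^{1/4-\ve}$; the resolution in the paper's framework is that for a single $k_i$ the actual error is far smaller than the crude $O_p(m^{-\ve/2})$ (the linear and cubic terms in the local expansion integrate to zero by oddness, so the true single-$k$ error is $O(m^{-1/2+o(1)})$), and in fact the numerator term (large $M$) dominates. Keeping the Gaussian factor explicitly, as you do, makes this transparent, and the correction-factor ratio is then $\exp\bigl(-\sum_{i\ne j}\psi_{H_i}'\psi_{H_j}'/(2\psi_P'')+\cdots\bigr)=1+O(n^{-2\ve+o(1)})$, which is within the lemma's tolerance. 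So the approach is sound in outline.

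Two caveats you should address if you complete the argument. First, the claimed $O(n^{-1/2+o(1)})$ error in your sharpened estimate is too optimistic for general $T$: after keeping only $\exp(-b^2/2)$, the leading uncaptured term comes from the interaction of the complex shift $b$ with the cubic $-iau^3$ term (and from $\sum_{i\in T}\psi_{H_i}''$ altering the Gaussian width), both of which contribute $O(M m^{-1/2+o(1)})=O(m^{-1/4-\ve+o(1)})$, not $O(m^{-1/2+o(1)})$. This is still small enough for the assembly to close with error $O(n^{-\ve})$, but you should not claim $O(n^{-1/2+o(1)})$. Second, your "Assembling $(\star)$" paragraph stops mid-sentence; you still need to actually carry out the ratio, which requires precisely the two multiplicativity facts the paper uses (for $p(\cdot)$ and for the infinite products $H_i(e^{-t_m})$, with uniformity as $m$ ranges over $n-O(M\gamma\sqrt n\log n)$), plus the estimate for the correction-factor ratio just described; none of this appears in the written proposal. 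So what you have is a correct and in some ways more careful framework than the paper's, but the final step — the one that actually produces $(\star)$ — is absent.
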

\begin{proof}
  We first get an asymptotic for $r(m)=r_{k_1,\dots,k_M;p}(m)$, which has generating function
  \[
P(x)\prod_{i=1}^M\prod_{j=0}^{\infty}(1-x^{k_ip^j})=:P(x)H_{k_1,\dots,k_M;p}(x)=P(x)H(x),
  \]
when $m\geq\f{n}{2}$. As in Section~\ref{sec3}, we can do this using a saddle-point argument, though $H$ is not quite well behaved enough for the results of~\cite{GrabnerKnopfmacherWagner2014} to apply. But doing the saddle-point analysis from scratch does not take very long, and also does not stray far from the arguments in~\cite{GrabnerKnopfmacherWagner2014}. So, setting $t_0=\f{\pi}{\sqrt{6m}}$, we have that $r_{k_1,\dots,k_M;p}(m)$ equals
\begin{align*}
&\int\limits_{|u|\leq t_0^{5/4}}P(e^{-t_0+iu})H(e^{-t_0+iu})e^{-m(-t_0+iu)}du+\int\limits_{t_0^{5/4}<|u|\leq\pi}P(e^{-t_0+iu})H(e^{-t_0+iu})e^{-m(-t_0+iu)}du\\
&=:I_1+I_2.
\end{align*}

We will first deal with $I_1$, which contributes the main term of the asymptotic. Note that we can write
\[
\log{H(e^{-v})}=-\sum_{i=1}^M\sum_{j=0}^{\infty}\sum_{\ell=1}^\infty\f{1}{\ell}e^{-v\ell k_ip^j}
\]
when $\RE{v}>0$. So, $\log{H(e^{-t})}$ has Mellin transform equal to
\[
-\sum_{i=1}^m k_i^{-s}\Gamma(s)\zeta(s+1)(1-p^{-s})\1,
\]
and thus equals
\begin{equation}\label{eq4.2}
-\f{1}{2\pi i}\sum_{i=1}^M\int_{\f{1}{2}-i\infty}^{\f{1}{2}+i\infty}(tk_i)^{-s}\Gamma(s)\zeta(s+1)(1-p^{-s})\1 ds
\end{equation}
using Mellin inversion. The poles of
\[
  (tk_i)^{-s}\Gamma(s)\zeta(s+1)(1-p^{-s})\1
\]
consist of a triple pole at $s=0$ with residue
  \[
\f{(\log{t})^2}{2\log{p}}+\f{2\log{k_i}-\log{p}}{2\log{p}}\log{t}+\f{6(\log{k_i})^2-6\log{k_i}\log{p}+(\log{p})^2+\gamma'}{12\log{p}},
  \]
where $\gamma'$ is an absolute constant, simple poles at $s=-r$ for all $r\in\{1\}\cup 2\N$ with residue
  \[
(-1)^{r}\zeta(1-r)\f{(tk_i)^{r}}{r!(1-p^{r})},
  \]
and simple poles at $s=\f{2\pi i \ell}{\log{p}}$ for all $\ell\in\Z\sm\{0\}$ with residue
  \[
\Gamma\left(\f{2\pi i \ell}{\log{p}}\right)\zeta\left(1+\f{2\pi i \ell}{\log{p}}\right)\f{(tk_i)^{-2\pi i\ell/\log{p}}}{\log{p}}.
  \]
We push the contour in~\eqref{eq4.2} all the way to the left using the rapid decay of $\Gamma$ in vertical strips to deduce that $\log{H(e^{-v})}$ equals
\begin{align*}
-\sum_{j=1}^M\bigg[\f{(\log{v})^2}{2\log{p}}&+\f{2\log{k_j}-\log{p}}{2\log{p}}\log{v}+\f{6(\log{k_j})^2-6\log{k_j}\log{p}+(\log{p})^2+\gamma'}{12\log{p}}\\
  &+\sum_{r=1}^\infty(-1)^{r}\zeta(1-r)\f{(vk_j)^{r}}{r!(1-p^{r})}+\sum_{|r|>0}\Gamma\left(\f{2\pi i r}{\log{p}}\right)\zeta\left(1+\f{2\pi i r}{\log{p}}\right)\f{(vk_j)^{-2\pi ir/\log{p}}}{\log{p}}\bigg]
\end{align*}
when $\RE{v}<n^{-1/4}$ and $|\IM{v}|\leq\RE{v}$. As a consequence, we have
\[
\log{\f{H(e^{-t_0+iu})}{H(e^{-t_0})}}=h(M,m)u+O_p(m^{-\ve})
\]
whenever $|u|\leq t_0^{5/4}$ for some $|h(M,m)|=O_p(M\sqrt{m}\log{m})$. We also have
\[
\log\f{P(e^{-t_0+iu})}{P(e^{-t_0})}=\f{\pi^2}{6}\left(i\f{u}{t_0^2}-\f{u^2}{t_0^3}-i\f{u^3}{t_0^4}+\f{u^4}{t_0^5}\right)+O(m^{-1/8})
\]
whenever $|u|\leq t_0^{5/4}$ (see, for example, Section VIII.6 of~\cite{FlajoletSedgewick2009}). So, using that $\f{\pi^2}{6}\cdot\f{u}{t_0^2}=mu$, we get that
\[
I_1=P(e^{-t_0})H(e^{-t_0})e^{mt_0}(1+O_p(m^{-\ve}))\int\limits_{|u|\leq t_0^{5/4}}\exp\left(\f{\pi^2}{6}\left(-\f{u^2}{t_0^3}-i\f{u^3}{t_0^4}+\f{u^4}{t_0^5}\right)+h(M,m)u\right)du.
\]
Making the change of variables $u\mapsto \f{u}{\f{\pi}{\sqrt{3}}t_0^{-3/2}}$ in the integral above gives that $I_1$ equals $\f{\sqrt{3}}{\pi}P(e^{-t_0})H(e^{-t_0})e^{mt_0}(1+O_p(m^{-\ve}))t_0^{3/2}$ times the quantity
\[
\int\limits_{|u|\leq \f{\pi}{\sqrt{3}}t_0^{-1/4}}\exp\left(\f{\sqrt{3}h(M,m)t_0^{3/2}}{\pi}u-\f{u^2}{2}-\f{3\sqrt{3t_0}}{\pi^3}iu^3+\f{9t_0}{\pi^4}u^4\right)du=:I_1'.
\]
Note that
\[
I_1'=\int\limits_{-t_0^{-\ve}}^{t_0^{-\ve}}\exp\left(\f{\sqrt{3}h(M,m)t_0^{3/2}}{\pi}u-\f{u^2}{2}-\f{3\sqrt{3t_0}}{\pi^3}iu^3+\f{9t_0}{\pi^4}u^4\right)du+O(e^{-\Omega_p(m^{\ve})}),
\]
so that
\[
I_1'=(1+O_p(m^{-\ve/2}))\int\limits_{-t_0^{-\ve}}^{t_0^{-\ve}}\exp\left(-\f{u^2}{2}\right)du.
\]
The Gaussian integral above can be extended to one over all of $\R$ at the cost of an error of $O(e^{-m^{\ve}})$, from which it follows that
\[
I_1=p(m)H(e^{-\pi/\sqrt{6m}})(1+O_p(m^{-\ve/2})).
\]

To bound $I_2$, note that
\[
|H(e^{-t_0+iu})|\leq \prod_{i=1}^M\prod_{j=0}^\infty(1+e^{-t_0k_ip^j})<\prod_{i=1}^M\prod_{j=0}^\infty\f{1}{1-e^{-t_0k_ip^j}}=\f{1}{H(e^{-t_0})}\leq\exp(O_p(M(\log{m})^2))
\]
for all $u\in[0,2\pi)$. We combine this with the bound
\[
\f{P(e^{-t+iu})}{P(e^{-t})}\leq\exp\left(-\f{1}{t(1+(\f{\pi t}{2u})^2)}+O(t)\right)
\]
from Lemma~3.1 of~\cite{GrabnerKnopfmacherWagner2014}, which is valid for all $|u|\leq \pi$ as $t\to 0^+$, to get that
\[
I_2\leq P(e^{-t_0})\exp(t_0m-m^{1/4}+O_p(m^{\f{1}{4}-\ve}(\log{m})^2)),
\]
which is at most $p(m)\exp(-\Omega_p(m^{1/4}))$ for $m$ sufficiently large by the standard estimate for $p(m)$. We thus conclude that
\begin{equation}\label{eq4.3}
r_{k_1,\dots,k_M;p}(m)=p(m)H(e^{-\pi/\sqrt{6m}})(1+O_p(m^{-\ve/2})).
\end{equation}

Now, to finish, we use that
\[
\f{p\left(n-\sum_{i=1}^M\ell_i\right)}{p(n)}=(1+O(n^{-1/4}))\prod_{i=1}^M\f{p(n-\ell_i)}{p(n)}
\]
and
\[
H_{k_1,\dots,k_M;p}(e^{-\pi/\sqrt{6(n-\sum_{i=1}^M\ell_i)}})=(1+O_p(n^{-2\ve}(\log{n})^2))\prod_{i=1}^MH_{k_i;p}(e^{-\pi/\sqrt{6(n-\ell_i)}})
\]
whenever $\ell_1,\dots,\ell_M\leq\gamma\sqrt{n}\log{n}$ to get
\[
\f{r_{k_1,\dots,k_M;p}(n-\sum_{i=1}^M\ell_i)}{p(n)}=(1+O_p(n^{-2\ve}(\log{n})^2))\prod_{i=1}^M\f{r_{k_i;p}(n-\ell_i)}{p(n)},
\]
from which the conclusion of the lemma follows.
\end{proof}

To finish the proof of Theorem~\ref{thm1.2}, we will also need a result of Mahler~\cite{Mahler1940} that counts the number of partitions of an integer into powers of any fixed positive integer.
\begin{lemma}[Mahler,~\cite{Mahler1940}]\label{lem4.2}
The number of partitions of $n$ into powers of $p$ equals
\[
\exp\left(\f{1}{2\log{p}}\left(\log\f{n/p}{\log{n/p}}\right)^2+\left(\f{1}{2}+\f{1}{\log{p}}+\f{\log\log{p}}{\log{p}}\right)\log{n}+O_p(\log\log{p})\right).
\]
\end{lemma}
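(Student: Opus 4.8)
This is Mahler's theorem, and the natural plan is to reconstruct it by the saddle-point method already used for Lemma~\ref{lem3.1}. Write $Q_p(x):=\prod_{j\geq0}(1-x^{p^j})\1=\sum_{n\geq0}q_p(n)x^n$, analytic on $|x|<1$, and recover $q_p(n)$ from $q_p(n)=\f1{2\pi i}\oint Q_p(x)x^{-n-1}\,dx$ over the circle $|x|=e^{-t_0}$, where $t_0=t_0(n,p)$ is the saddle. The one structural novelty compared with Section~\ref{sec3} is that the exponents $p^j$ are exponentially sparse rather than dense, so $\log Q_p(e^{-t})$ grows only like $(\log(1/t))^2$ as $t\to0^+$, the saddle sits at $t_0$ of order $\f{\log n}{n}$ instead of $\f1{\sqrt n}$, and the final answer has the shape $\exp(c(\log n)^2)$.

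First I would estimate $\log Q_p(e^{-t})$ as $t\to0^+$, exactly as in the proof of Lemma~\ref{lem3.1}: from $\log Q_p(e^{-t})=\sum_{j\geq0}\sum_{\ell\geq1}\f1\ell e^{-t\ell p^j}$ one reads off the Mellin transform $\Gamma(s)\zeta(s+1)(1-p^{-s})\1$ and shifts the inversion contour to the left past the triple pole at $s=0$ (the simple poles of $\Gamma$, of $\zeta(s+1)$, and of $(1-p^{-s})\1$ coincide there), the simple poles at $s=-1,-2,\dots$ (contributing $O_p(t)$), and the simple poles at $s=\f{2\pi i\ell}{\log p}$, $\ell\in\Z\setminus\{0\}$. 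This gives
\[
\log Q_p(e^{-t})=\f{(\log(1/t))^2}{2\log p}+\tfrac12\log(1/t)+\Phi_p(\log(1/t))+O_p(t),
\]
where $\Phi_p$ combines the constant term of the Laurent expansion at $s=0$ (an explicit quantity involving $\log p$, $\gamma$, and the Stieltjes constants) with the bounded-amplitude, $(\log p)$-periodic oscillation coming from the poles on the line $\RE s=0$; it is $O_p(1)$.

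Next I would extract $q_p(n)$ by the saddle-point method. Choose $t_0$ by $\f{d}{dt}\bigl(\log Q_p(e^{-t})+nt\bigr)\big|_{t_0}=0$, i.e.\ $\f1{t_0}\bigl(\f{\log(1/t_0)}{\log p}+\tfrac12\bigr)=n$, and solve this transcendental equation by iteration to obtain $\log(1/t_0)=\log\f{n/p}{\log(n/p)}+O_p(1)$ and $nt_0=\f{\log(1/t_0)}{\log p}+\tfrac12$. On a major arc $|u|\leq\varepsilon_0$ slightly wider than the Gaussian width $\asymp t_0\sqrt{\log p/\log(1/t_0)}$ the quadratic approximation to $\log\bigl(Q_p(e^{-t_0+iu})e^{-inu}\bigr)$ is valid, and the resulting Gaussian integral contributes $-\tfrac12\log\bigl(2\pi g''(t_0)\bigr)=-\log(1/t_0)-\tfrac12\log\log(1/t_0)+\tfrac12\log\log p+O_p(1)$ to $\log q_p(n)$, with $g:=\log Q_p(e^{-\cdot})$ and $g''(t_0)\asymp t_0^{-2}\log(1/t_0)/\log p$. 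Assembling
\[
\log q_p(n)=\log Q_p(e^{-t_0})+nt_0-\tfrac12\log\bigl(2\pi g''(t_0)\bigr)+o(1),
\]
substituting $\log(1/t_0)=\log\f{n/p}{\log(n/p)}+O_p(1)$, and expanding the square, the $O_p(1)$ discrepancy in $\log(1/t_0)$ produces a term linear in $\log\f{n/p}{\log(n/p)}$ which, combined with the $\tfrac12\log(1/t_0)$ from the first step, the $\f{\log(1/t_0)}{\log p}$ from $nt_0$, and the $-\log(1/t_0)$ from the Gaussian factor, yields exactly the stated coefficient $\tfrac12+\f1{\log p}+\f{\log\log p}{\log p}$ of $\log n$; everything else is absorbed into the claimed error term.

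The main obstacle is twofold, and for both parts I would lean on the classical analyses of this function by Mahler and by de~Bruijn. First, the minor arc $\varepsilon_0<|u|\leq\pi$: the trivial bound $|Q_p(e^{-t_0+iu})|\leq Q_p(e^{-t_0})$ does not suffice, and one needs a genuine saving from the factors with $p^j\leq1/t_0$, obtained by bounding $|1-e^{(-t_0+iu)p^j}|$ below in terms of the distance from $up^j$ to $2\pi\Z$ --- a Weyl-type exponential-sum input. Second, the bookkeeping: one must track the dependence on $p$ through the Laurent expansion at $s=0$ and through the iterative solution of the saddle equation carefully enough both to reproduce the stated coefficient of $\log n$ and to verify that the remaining $p$-dependent quantities package into $O_p(\log\log p)$, and one must settle for an error term rather than a true asymptotic because of the oscillatory part of $\Phi_p$. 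Since only $p\leq13$ is needed for the application here, it would in fact suffice to treat $p$ as an absolute constant throughout and record the $p$-dependence of the implied constants only at the end.
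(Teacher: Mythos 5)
The paper gives no proof of Lemma~\ref{lem4.2}; it is stated as a citation to Mahler~\cite{Mahler1940}, so there is no ``paper's own proof'' to compare against. What you have written is a reconstruction of Mahler's theorem by the same saddle-point machinery the paper uses in Sections~\ref{sec3} and~\ref{sec4}, and in outline it is sound. The Mellin transform of $\log Q_p(e^{-t})$ really is $\Gamma(s)\zeta(s+1)(1-p^{-s})^{-1}$; the triple pole at $s=0$ gives $\log Q_p(e^{-t})=\f{(\log 1/t)^2}{2\log p}+\f12\log(1/t)+O_p(1)$; the saddle equation $\f1{t_0}\bigl(\f{\log(1/t_0)}{\log p}+\f12\bigr)=n$ is right, and iterating it gives $T:=\log(1/t_0)=\log\f{n/p}{\log(n/p)}+(\log p+\log\log p)+o(1)$. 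Assembling $\log q_p(n)=\f{T^2}{2\log p}+\f{T}{2}+nt_0-\f12\log(2\pi g''(t_0))+o(1)$ with $nt_0=\f{T}{\log p}+\f12$ and $\f12\log(2\pi g''(t_0))=T+\f12\log T-\f12\log\log p+O_p(1)$, and substituting the expansion of $T$, does reproduce exactly the coefficient $\f12+\f1{\log p}+\f{\log\log p}{\log p}$ of $\log n$, so your bookkeeping claim checks out. (As a side remark, after this assembly the leftover $-\f12\log T$ is of size $\log\log n$, so the genuine error term is $O_p(\log\log n)$; the $O_p(\log\log p)$ in the lemma as printed looks like a slip, though it makes no difference to how the lemma is used.)

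The one place your sketch is genuinely thin is the minor arc, and you are right to flag it: the trivial bound $|Q_p(e^{-t_0+iu})|\leq Q_p(e^{-t_0})$ loses a factor $\sqrt{g''(t_0)}\asymp t_0^{-1}\sqrt{T/\log p}$, and because the exponents $p^j$ are exponentially sparse you cannot just invoke the quadratic $P$-type decay used in Lemma~\ref{lem4.1}; one really does need to extract cancellation from the $O(\log(1/t_0)/\log p)$ factors with $p^j\lesssim 1/t_0$ via an equidistribution argument on $up^j\pmod{2\pi}$. You correctly identify this as the crux and defer to Mahler and de~Bruijn, which is the honest thing to do; a fully self-contained write-up would have to spell that estimate out. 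Given that the paper treats this as a black-box citation, your reconstruction is more than the paper demands.
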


Now we can prove Theorem~\ref{thm1.2}.
\begin{proof}[Proof of Theorem~\ref{thm1.2}]

We show that there exist $\delta_p,\ve_p,\gamma_p'>0$ such that, for any collection of $M\geq Cn^{1/4-\ve_p}$ distinct $k_1,\dots,k_M$ between $\f{1}{p^2}n^{1/4-\ve_p}$ and $n^{1/4-\ve_p}$, there exists some $i$ such that $M_\mu^{(k_i)}\geq(1+\delta_p)\f{\sqrt{6}}{2\pi}\sqrt{n}\log{n}$ for all partitions $\mu$ of $n$ outside of a set of density $\exp(-\Omega_{p,C}(n^{\gamma'_p}))$ for some $i=1,\dots,M$. That there are $\Omega_p(n^{1/4-\ve_p})$ many $k_1,\dots,k_M$ in the interval $[\f{1}{p^2}n^{1/4-\ve_p},n^{1/4-\ve_p}]$ that all satisfy
\[
\left\{\log_p\f{(1+\delta_p)\f{\sqrt{6}}{2\pi}\sqrt{n}\log{n}}{k}\right\}\leq\log_p\f{1+\delta_p}{1+\f{\delta_p}{2}}
\]
(once $\delta_p$ has been fixed) is an immediate consequence of the fact that $\{\log_p{k}\}$ has distribution function $\f{p^t-1}{p-1}$ in intervals of the form $[p^a,p^{a+1}]$. As in the proof of Theorem~\ref{thm1.1}, these two results together imply that $\tilde{\mu}$ has a part of size at least $(1+\f{\delta_p}{2})\f{\sqrt{6}}{2\pi}\sqrt{n}\log{n}$ with probability at least $1-\exp(-\Omega_p(n^{\gamma_p}))$, from which Theorem~\ref{thm1.2} will immediately follow using Lemmas~\ref{lem2.1},~\ref{lem2.2}, and~\ref{lem2.3}.

Set
\[
f_p(k):=\f{1}{p(n)}\sum_{\substack{\ell\leq \gamma\sqrt{n}\log{n} \\ k\mid \ell}}r_{k;p}(n-\ell)q_p\left(\f{\ell}{k}\right)
\]
for every $k\leq n^{1/4-\ve}$. By~\eqref{eq4.3}, Lemma~\ref{lem4.2}, and the standard estimate for $p(n)$, the proportion $\f{r_{k;p}(n-\ell)q_p\left(\f{\ell}{k}\right)}{p(n)}$ equals a quantity that's $\exp(O_p(\log\log{n}))$ times
\[
\exp\left(-\f{\log(t_0k)^2}{2\log{p}}+\f{\log(t_0k)}{2}+\f{\log(\f{\ell/pk}{\log(\ell/pk)})^2}{2\log{p}}+\left(\f{1}{2}+\f{1}{\log{p}}+\f{\log\log{p}}{\log{p}}\right)\log\f{\ell}{k}-\f{\pi \ell}{\sqrt{6n}}\right),
\]
where $t_0=\f{\pi}{\sqrt{6(n-\ell)}}$. When $p=2$ and $\ell\leq\f{1}{10000}\sqrt{n}\log{n}$, the above is $O(\f{1}{n})$, and when $p$ is an arbitrary prime, $\f{1}{p^2}n^{1/4-\ve}\leq k \leq n^{1/4-\ve}$, and $\ell=\gamma\f{\sqrt{6}}{2\pi}\sqrt{n}\log{n}$, the above equals
\[
\exp\left(\left(\f{(\f{1}{4}+\ve)(1+\log(\f{2\gamma}{p(1+4\ve)})+\log\log{p})}{\log{p}}-\f{\gamma}{2}\right)\log{n}+O_p(\log\log{n})\right)
\]

Now consider the function
\[
g_p(\gamma,\ve):=\f{(\f{1}{4}+\ve)(1+\log(\f{2\gamma}{p(1+4\ve)})+\log\log{p})}{\log{p}}-\f{\gamma}{2}.
\]
It follows from a small amount of calculus that when $p=2$ and $\ve_2=\f{(2+2\delta)\log{2}-1}{4}$, we have
\[
g_2(\gamma,\ve_2)<-\f{1}{4}-\ve_2+\f{1}{4}\left(\delta+(2+2\delta)\log\left(1-\f{2\delta}{4+4\delta}\right)\right)
\]
 whenever $\gamma<1+\f{\delta}{2}$. By fixing $\delta>0$ sufficiently small and using that $\log\left(1-\f{2\delta}{4+4\delta}\right)<-\f{2\delta}{4+4\delta}$, we thus get that $f_2(k)=O(\f{(\log{n})^{O(1)}}{n^{\Omega(1)}})$ whenever $\gamma<1+\f{\delta}{2}$ and $\f{1}{4}n^{1/4-\ve_2}\leq k\leq n^{1/4-\ve_2}$. This implies that there exists a $\gamma_2'$ such that, for any $\f{1}{4}n^{1/4-\ve_2}\leq k_1,\dots,k_M\leq n^{1/4-\ve_2}$ with $M\geq Cn^{1/4-\ve_2}$, we have
\[
\prod_{i=1}^Mf_2(k_i)=O(\exp(-\Omega_C(n^{\gamma'_2}))),
\]
from which the desired result for $p=2$ now follows from Lemma~\ref{lem4.1}.

When $p>2$, we have
\[
f_p(k)=1-\f{1}{p(n)}\sum_{\substack{\ell>\gamma\sqrt{n}\log{n} \\ k\mid \ell}}r_{k;p}(n-\ell)q_p\left(\f{\ell}{k}\right),
\]
so that
\[
f_p(k)\leq 1-\f{\gamma\beta\sqrt{n}\log{n}}{kp(n)}\min_{\substack{\gamma\sqrt{n}\log{n}<\ell\leq \gamma(1+\beta) \\ k\mid \ell}}r_{k;p}(n-\ell)q_p\left(\f{\ell}{k}\right)
\]
for any $\beta>0$. When $\delta>0$, it follows from some more calculus that $g_p(1+\delta,\ve)$ is decreasing as $\ve$ increases and attains a maximum of $-\f{1}{2}+\f{1-2\delta\log{p}-\log{\f{p}{2-2\delta}}+\log\log{p}}{4\log{p}}$ at $\ve=0$ as $\ve$ ranges over $[0,\f{1}{4}]$. When $p>13$, the quantity $\f{1-2\delta\log{p}-\log{\f{p}{2-2\delta}}+\log\log{p}}{4\log{p}}$ is negative for all $\delta>0$, but when $p\leq 13$ it is positive for $\delta>0$ sufficiently small. As a consequence, there exist $\delta_p,\ve_p,\alpha_p>0$ satisfying $\alpha_p>\ve_p$ such that $f_p(k)\leq 1-\Omega_p(n^{1/4-\alpha_p})$ whenever $\f{1}{p^2}n^{1/4-\ve_p}\leq k\leq n^{1/4-\ve_p}$. Thus, for any $\f{1}{p^2}n^{1/4-\ve_2}\leq k_1,\dots,k_M\leq n^{1/4-\ve_2}$ with $M\geq Cn^{1/4-\ve_p}$, we have
\[
\prod_{i=1}^Mf_p(k_i)=O_p(\exp(-\Omega_{p,C}(n^{\gamma'_p})))
\]
for $\gamma_p':=\alpha_p-\ve_p$, from which the desired result for $2<p\leq 13$ now follows from Lemma~\ref{lem4.1}.
\end{proof}

\bibliographystyle{plain}
\bibliography{bib}

\end{document}